\newtheorem{theorem}{Theorem}
\newtheorem{lemma}[theorem]{Lemma}
\newtheorem{corollary}[theorem]{Corollary}
\theoremstyle{remark}
\DeclareMathOperator{\Cay}{Cay}
\begin{document}
\title{Diameter 2 Cayley Graphs of Dihedral Groups}
\author{Grahame Erskine}
\address{Department of Mathematics and Statistics, The Open University, Milton Keynes, UK}
\email{Grahame.Erskine@open.ac.uk}
\keywords{Degree-diameter problem, Cayley graph, Dihedral group}
\subjclass[2010]{05C25}
\begin{abstract}
We consider the degree-diameter problem for Cayley graphs of dihedral groups.
We find upper and lower bounds on the maximum number of vertices of such a graph with diameter 2 and degree $d$.
We completely determine the asymptotic behaviour of this class of graphs by showing that both limits are asymptotically $d^2/2$.
\end{abstract}
\maketitle

\section{Introduction}
The \emph{degree-diameter problem} seeks to determine the largest possible number $n(d,k)$ of vertices of a graph of maximum
degree $d$ and diameter $k$. The survey \cite{miller2005moore} summarises current known results for the general case and also for various restricted problems
where we consider only graphs of certain classes. In this paper we restrict attention to Cayley graphs of dihedral groups with diameter 2 and prove an asymptotic
limit for the maximum order of such graphs.

An elementary counting argument yields an upper limit of $d^2+1$ (the well-known \emph{Moore bound}) for a graph of maximum degree $d$ and diameter 2. 
Thus for any family of diameter 2 graphs the largest possible asymptotic order is $d^2$. When we restrict consideration to Cayley graphs,
it is known \cite{siagiova2012approaching} that there is a relatively sparse family of groups (affine groups over finite fields of characteristic 2)
on which Cayley graphs can be constructed with asymptotic order $d^2$. 
For more elementary families of groups, less is known. For example an extension of the counting argument shows that for abelian
groups the largest possible asymptotic order is $d^2/2$, but the best known result \cite{macbeth2012cayley} has limit $3d^2/8$.
A recent result of Abas\cite{Abas2014}, shows that a Cayley graph of diameter 2 and asymptotic order $d^2/2$ can be constructed for any degree $d$
using direct products of dihedral and cyclic groups.

In this paper we show that the asymptotic limit for dihedral groups is precisely $d^2/2$, 
first by obtaining a lower bound by way of a construction involving Galois fields, 
and then by finding an upper bound for generalised dihedral groups by a counting argument.

We denote the dihedral group of order $n$ by $D_n$. We will view the usual dihedral group as an example of a 
\emph{generalised dihedral group} $G\rtimes C_2$ which is a semidirect product of an abelian group $G$
with the multiplicative group $\{\pm 1\}$ where the action on $G$ is via its inversion automorphism.
For a group $G$ and a subset $S\subset G$ which is inverse-closed and does not contain the identity in $G$, we define the
Cayley graph $\Cay(G,S)$ to have vertex set $G$ and an undirected edge $gh$ if and only if $gh^{-1}\in S$. The graph is vertex-transitive and hence regular,
with degree $d=\lvert S\rvert$. From the definition of Cayley graphs it is immediate that $\Cay(G,S)$ has diameter at most $k$ if and only if each element of $G$
can be expressed as a product of no more than $k$ elements of the generating set $S$.

By $DC(d,k)$ we mean the largest number of vertices in a Cayley graph of a dihedral group having degree $d$ and diameter $k$.

\section{Results}
Our first result uses a construction based on finite fields to obtain a lower bound for $DC(d,2)$ for certain values of $d$.
The method is similar to constructions in \cite{macbeth2012cayley}. We also use the well-known result that
the cyclic group $\mathbb{Z}_n$ has a diameter 2 Cayley graph with a generating set of size at most $2\lceil\sqrt{n}\rceil$.
(To see this, let $K=\lceil\sqrt{n}\rceil,M=\lfloor\frac{K}{2}\rfloor$ and take a generating set consisting of 
$\{\pm 1,\pm 2,\ldots,\pm M,\pm K,\pm 2K,\ldots,\pm MK\}$.)
\begin{lemma}
\label{lemma:lbound}
If $p$ is any prime and $d=2(p+\lceil\sqrt{p}\rceil-1)$, then $DC(d,2)\geq 2p(p-1)$.
\end{lemma}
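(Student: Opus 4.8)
The plan is to prove the bound by an explicit construction in the generalised dihedral group $\mathrm{Dih}(G)=G\rtimes C_2$, where $G=\mathbb{F}_p\times\mathbb{F}_p^{*}$ is written additively in the first coordinate and multiplicatively in the second. Then $\lvert G\rvert=p(p-1)$, and since $\gcd(p,p-1)=1$ we have $G\cong\mathbb{Z}_{p(p-1)}$, so $\mathrm{Dih}(G)=D_{p(p-1)}$ is a genuine dihedral group of order $2p(p-1)$, admissible for $DC$. I would split the generating set into a set $A\subseteq G$ of rotations $(a,+1)$, taken inverse-closed, and a set $B\subseteq G$ of reflections $(b,-1)$, which are involutions and hence automatically inverse-closed. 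Expanding the dihedral multiplication, a rotation $(x,+1)$ lies within distance $2$ of the identity if and only if $x\in A\cup(A+A)\cup(B-B)$, while a reflection $(x,-1)$ is reached if and only if $x\in B\cup(A+B)$ (using $A=-A$). Thus the whole problem reduces to choosing $A,B$ with $\lvert A\rvert+\lvert B\rvert\le d$ that satisfy these two covering identities on $G$.

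For the construction I would take $B=\{(x,x):x\in\mathbb{F}_p^{*}\}$, the \emph{diagonal} set that uses each nonzero field element simultaneously as an additive and a multiplicative coordinate, together with $A=L\cup(\{0\}\times T)$, where $L=\{(u,1):u\in\mathbb{F}_p^{*}\}$ is the punctured additive line and $T\subseteq\mathbb{F}_p^{*}$ is a symmetric diameter-$2$ generating set for the cyclic group $\mathbb{F}_p^{*}\cong\mathbb{Z}_{p-1}$ furnished by the $\lceil\sqrt{\cdot}\,\rceil$ trick recalled above, so that $\lvert T\rvert\le 2\lceil\sqrt{p-1}\rceil\le 2\lceil\sqrt p\rceil$. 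Both pieces of $A$ are symmetric and avoid the identity (one ensures $1\notin T$), and $B$ contains no identity element. This gives $\lvert A\rvert+\lvert B\rvert\le(p-1)+2\lceil\sqrt p\rceil+(p-1)=2(p+\lceil\sqrt p\rceil-1)=d$, and any shortfall is absorbed by padding $S$ with redundant generators, which cannot increase the diameter.

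The heart of the argument is the difference computation for $B$. Writing $(x,x)-(y,y)=(x-y,\,x/y)$ and substituting $w=x/y$, $x=wy$, the first coordinate becomes $(w-1)y$; for each fixed $w\ne 1$ this runs over all of $\mathbb{F}_p^{*}$ as $y$ does, whereas $w=1$ returns only the identity. Hence $B-B=\{(0,1)\}\cup\bigl(\mathbb{F}_p^{*}\times(\mathbb{F}_p^{*}\setminus\{1\})\bigr)$, so $B-B$ captures every element with nonzero first coordinate and second coordinate $\ne 1$. I would then verify the two covering conditions in turn. The punctured line gives $L\cup(L+L)=\mathbb{F}_p\times\{1\}$, covering the rotations with second coordinate $1$; the set $T$ gives $\{0\}\times(T\cup(T+T))=\{0\}\times\mathbb{F}_p^{*}$, covering the ``axis''; and $B-B$ covers all remaining rotations, establishing the rotation condition. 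For the reflections, $L+B=\{(u+x,x):u\ne 0\}$ already yields every $(v,s)$ with $v\ne s$, while the element $(s,s)\in B$ supplies the diagonal point, so $B\cup(A+B)=G$. Finally $2p(p-1)>d+1$ forces the diameter to be exactly $2$, giving $DC(d,2)\ge 2p(p-1)$.

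I expect the main obstacle to be precisely the difference-set computation together with checking that the three covering pieces — the line $\{\text{second coordinate}=1\}$, the axis $\{\text{first coordinate}=0\}$, and the generic off-axis part supplied by $B-B$ — exhaust $G$ with no gaps. The one genuinely clever point is the interplay between the additive and the multiplicative role of the field element $x$ in the diagonal $B$, which is what lets a set of size $p-1$ have a difference set of size $\sim p^2$. The remaining difficulty is bookkeeping: correctly excluding the identity from $L$ and $T$, managing the $\lceil\sqrt{p-1}\rceil$ versus $\lceil\sqrt p\rceil$ slack, and padding so that the degree lands exactly on $d=2(p+\lceil\sqrt p\rceil-1)$.
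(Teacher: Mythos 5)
Your construction is correct, and it is a mirror image of the paper's rather than a verbatim copy. You work in the same group $(\mathbb{F}_p^+\times\mathbb{F}_p^*)\rtimes C_2$ and your set $B$ of diagonal reflections $(x,x,-1)$ is exactly the paper's $\{b_x\}$; your computation $B-B=\{(0,1)\}\cup\bigl(\mathbb{F}_p^*\times(\mathbb{F}_p^*\setminus\{1\})\bigr)$ is precisely the paper's step $(x,y,1)=b_zb_t$ with $z=yx(y-1)^{-1}$, $t=x(y-1)^{-1}$, and this is the heart of both arguments. Where you diverge is in the rotation generators: the paper takes the full multiplicative axis $\{(0,x,1):x\neq 1\}$ ($p-2$ elements) and spends $2\lceil\sqrt p\rceil$ generators on the additive cyclic subgroup $\langle(1,1,1)\rangle\cong\mathbb{Z}_p$, whereas you take the full punctured additive line $L=\{(u,1,1):u\neq 0\}$ ($p-1$ elements) and spend $\leq 2\lceil\sqrt{p-1}\rceil$ generators on the multiplicative axis $\cong\mathbb{Z}_{p-1}$. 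The degree budgets coincide, but your transposed choice buys a small simplification: the paper needs the extra reflection $v=(0,1,-1)$ and a two-case analysis ($a_zb_x$ versus $a_yv$) to reach the reflections, while for you $L+B=\{(v,s,-1):v\neq s\}$ together with $B$ itself covers all reflections in one stroke, axis reflections $(0,y,-1)$ included, so no analogue of $v$ is needed. Two bookkeeping points deserve care but are not gaps: since $DC(d,2)$ counts graphs of degree exactly $d$ and your set may fall short of $d$ by a few elements (the $\lceil\sqrt{p-1}\rceil$ versus $\lceil\sqrt p\rceil$ slack, and the standard cyclic generating set can have size strictly below $2\lceil\sqrt{\cdot}\rceil$), your padding should explicitly use involutions from $G\setminus S$ to preserve inverse-closure --- this is exactly the device the paper defers to its Lemma \ref{lemma:extend}; and like the paper's construction, yours degenerates for very small $p$ (e.g.\ $p=2$), which neither proof addresses and which is immaterial to the asymptotic use of the lemma.
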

\begin{proof}
Let $F$ be the Galois field $GF(p)$ where $p$ is a prime. The additive and multiplicative groups $F^+$ and $F^*$ are cyclic of coprime orders
so that $F^+\times F^*$ is a cyclic group of order $n=p(p-1)$.
Consider the dihedral group $D_{2n}$ as a semidirect product 
$G=(F^+\times F^*)\rtimes C_2$ where the cyclic group $C_2$ is thought 
of as the multiplicative group $\{\pm 1\}$ and acts on $F^+\times F^*$ by inversion. 
Specifically, the multiplication rule is:
\[(a,b,c)(\alpha,\beta,\gamma)=(a+\alpha c,b\beta^c,c\gamma)\]

The subgroup $C=\langle(1,1,1)\rangle$ is cyclic of order $p$ and so itself has a diameter 2 Cayley graph with respect to some generating set $\{c_i\}$
of cardinality $2\lceil\sqrt{p}\rceil$.
Consider now a generating set $S$ of the full group $G$ containing:
\begin{align*}
v=(0,1,-1) & \qquad(1\text{ element})\\
a_x=(0,x,1),x\in F^*\setminus\{1\} & \qquad(p-2\text{ elements})\\ 
b_x=(x,x,-1),x\in F^* & \qquad(p-1\text{ elements})\\
c_i,i=1\ldots 2\lceil\sqrt{p}\rceil & \qquad(2\lceil\sqrt{p}\rceil\text{ elements})
\end{align*}
It suffices to show that the set $S$ is inverse-closed and that every element of the group can be expressed as the product of at most two of these generators.

Since $v^{-1}=v,a_x^{-1}=a_{x^{-1}},b_x^{-1}=b_x$ and $\{c_i\}$ is inverse-closed it follows that $S$ is inverse-closed.
To show that the diameter is 2 we consider all the possible cases as follows.

If $x\neq 0,x\neq y$ then $(x,y,-1)=(0,z,1)(x,x,-1)=a_z b_x$ where $z=yx^{-1}$. 

If $x\neq 0,x=y$ then $(x,y,-1)=(x,x,-1)=b_x$. 

If $x=0,y\neq 1$ then $(x,y,-1)=(0,y,1)(0,1,-1)=a_y v$.

If $x=0,y=1$ then $(x,y,-1)=(0,1,-1)=v$.

If $y\neq 1,x\neq 0$ then $(x,y,1)=(z,z,-1)(t,t,-1)=b_z b_t$ where $z=yx(y-1)^{-1},t=x(y-1)^{-1}$. 

If $y\neq 1,x=0$ then $(x,y,1)=(0,y,1)=a_y$. 

If $y=1$ then $(x,y,1)\in C$ and so is the product of at most two $c_i$.

Since $\lvert S\rvert=2(p+\lceil\sqrt{p}\rceil-1)$ the result follows.
\end{proof}

The previous result shows that $\displaystyle\limsup_{d\to\infty}\frac{DC(d,2)}{d^2}\geq\frac{1}{2}$.
The next result shows that $1/2$ is in fact also an upper bound.
\begin{lemma}
\label{lemma:ubound}
Let $G$ be a generalised dihedral group of order $2n$ and let $S$ be an inverse-closed generating set for $G$ not containing the identity.
Suppose that the Cayley graph $\Cay(G,S)$ has diameter 2. Then the degree $d$ of $\Cay(G,S)$ satisfies $d\geq 2\sqrt{n}-1$.
\end{lemma}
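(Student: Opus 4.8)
The plan is to exploit the coset structure of a generalised dihedral group to beat the naive Moore bound (which would only give $d\gtrsim\sqrt{2n}$). Write $G=A\rtimes C_2$ with $A$ abelian of order $n$, and record each element as a pair $(a,\varepsilon)$ with $a\in A$ and $\varepsilon\in\{\pm1\}$, multiplying by $(a,\varepsilon)(b,\delta)=(a+\varepsilon b,\varepsilon\delta)$ (writing $A$ additively, so that the action of $-1$ is negation). Call the elements with $\varepsilon=+1$ \emph{rotations} and those with $\varepsilon=-1$ \emph{reflections}; there are $n$ of each. The basic preliminary observation is that $(a,-1)(a,-1)=(a-a,+1)=e$, so every reflection is an involution; this feeds into the inverse-closure analysis below.

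Next I would split the generating set as $S=R\cup T$, where $R=S\cap(A\times\{+1\})$ and $T=S\cap(A\times\{-1\})$, and set $r=\lvert R\rvert$, $t=\lvert T\rvert$, so that $d=r+t$. Let $R_A,T_A\subseteq A$ be the corresponding sets of $A$-components. Two features are crucial. First, since the identity $(0,+1)\notin S$, we have $0\notin R_A$, so $R_0:=R_A\cup\{0\}$ has exactly $r+1$ elements. Second, inverse-closure of $S$ forces $R_A=-R_A$ (the inverse of $(\rho,+1)$ is $(-\rho,+1)$), whereas it imposes no constraint on $T_A$ because each reflection is self-inverse.

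The heart of the argument is to count the reflections reachable in at most two steps. A length-one reflection lies in $T$, contributing the components $T_A$. A length-two product is a reflection exactly when one factor is a rotation and the other a reflection, giving $(\rho,+1)(\tau,-1)=(\rho+\tau,-1)$ and $(\tau,-1)(\rho,+1)=(\tau-\rho,-1)$; so the reachable components form $T_A\cup(R_A+T_A)\cup(T_A-R_A)$. Here I invoke the symmetry $R_A=-R_A$: it yields $T_A-R_A=T_A+R_A$, so the two length-two contributions coincide and the whole set collapses to $R_0+T_A$. This is precisely where the dihedral structure beats the generic estimate: without $R_A=-R_A$ one would retain two separate sumsets and lose a factor of $2$. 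Since the diameter is $2$, every reflection must be reached, so $R_0+T_A=A$, whence $n=\lvert A\rvert=\lvert R_0+T_A\rvert\le\lvert R_0\rvert\,\lvert T_A\rvert=(r+1)t$.

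Finally I would conclude by AM--GM: from $(r+1)t\ge n$ we get $d+1=(r+1)+t\ge 2\sqrt{(r+1)t}\ge 2\sqrt{n}$, that is, $d\ge 2\sqrt{n}-1$. The main obstacle is conceptual rather than computational: recognising that the symmetry of the rotation generators is exactly what removes the spurious factor of $2$ from the reflection count; the sumset cardinality bound and the AM--GM step are then routine. (One should also note $t\ge1$, since $T=\varnothing$ would confine $S$ to the index-$2$ rotation subgroup and contradict that $S$ generates $G$; in any case this is subsumed by $(r+1)t\ge n\ge1$.)
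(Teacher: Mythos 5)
Your proof is correct and takes essentially the same approach as the paper: the same split of $S$ into rotation and reflection parts, the same use of inverse-closure of the rotation part to show the products $(\tau,-1)(\rho,+1)$ duplicate the products $(\rho,+1)(\tau,-1)$ (your collapse $T_A-R_A=T_A+R_A$), and the same counting inequality $(r+1)t\geq n$, which is the paper's $m_2(m_1+1)\geq n$. The only cosmetic difference is that you finish with AM--GM where the paper invokes elementary calculus to minimise $m_1+m_2$.
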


\begin{proof}
Let $G=H\rtimes C_2$ where $H$ is an abelian group of order $n$ and $C_2$ acts on $H$ by inversion. 
Let $C$ be the index 2 subgroup of $G$ isomorphic to $H$ and write $S=A\cup B$ where $A\subset C$ and $B\subset G\setminus C$.
Let $m_1=\lvert A\rvert,m_2=\lvert B\rvert$.

Consider how the $n$ elements of $G\setminus C$ can be expressed as a product of at most two elements in $S$. There are $m_2$
possibilities from the set $B$ itself, then $m_1 m_2$ elements of the form $ab$ where $a\in A,b\in B$. Since $a^{-1}b=ba$ and the set
$A$ is inverse-closed the products of the form $ba$ do not contribute any further elements. So we require:
\[m_2(m_1+1)\geq n\]
The degree $d$ of the Cayley graph is $\lvert S\rvert=m_1+m_2$. All numbers are inherently positive and so elementary calculus shows that the minimum
possible value of $m_1+m_2$ occurs when $m_2=m_1+1=\sqrt{n}$. So $d\geq 2\sqrt{n}-1$.
\end{proof}

The bound $|G|\leq\frac12(d+1)^2$ in Lemma \ref{lemma:ubound} is valid for all values of $d$, 
but as it stands Lemma \ref{lemma:lbound} only holds for a restricted set of values.
We can extend the result of Lemma \ref{lemma:lbound} by using the ideas in \cite{siran2011large} to obtain a lower bound valid for all values of $d$.
\begin{lemma}
\label{lemma:extend}
Let $d\geq 6$ and let $p$ be the largest prime satisfying $D(p)=2(p+\lceil\sqrt{p}\rceil-1)\leq d$.
Then $DC(d,2)\geq 2p(p-1)$.
\end{lemma}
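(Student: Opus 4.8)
The plan is to keep the underlying group fixed and pad the generating set. Let $G=(F^+\times F^*)\rtimes C_2$ and $S_0$ be the group and generating set produced by Lemma \ref{lemma:lbound} for the prime $p$, so that $|G|=2p(p-1)$, the graph $\Cay(G,S_0)$ has diameter $2$, and $|S_0|=D(p)\le d$ by hypothesis. I would adjoin $d-D(p)$ further elements to $S_0$, chosen so that the enlarged set $S$ remains inverse-closed, avoids the identity, and has no repeats. Since $S_0\subseteq S$, every element of $G$ is still a product of at most two generators, so $\Cay(G,S)$ has diameter $2$, degree exactly $d$, and $2p(p-1)$ vertices; this gives $DC(d,2)\ge 2p(p-1)$.

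Two points make the padding legitimate. For inverse-closure I would use the fact that in a generalised dihedral group every element outside the index-$2$ subgroup is an involution: there are $p(p-1)$ such reflections and $S_0$ uses only $p$ of them, leaving an ample reserve of self-inverse elements. One can therefore adjoin any required number of extra generators by taking inverse pairs of rotations together with a single spare reflection when $d-D(p)$ is odd, so parity is not an issue. For there to be room at all one needs the adjoined elements to be distinct from the identity and from $S_0$, that is $d-D(p)\le 2p(p-1)-1-D(p)$, equivalently $d\le 2p(p-1)-1$.

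Verifying this room condition is the main obstacle. As $p$ is the largest prime with $D(p)\le d$, the next prime $p'$ satisfies $d<D(p')$, so it suffices to show $D(p')\le 2p(p-1)$. I would control the prime gap by Bertrand's postulate, which gives $p'<2p$, whence $D(p')\le D(2p)=2\bigl(2p+\lceil\sqrt{2p}\,\rceil-1\bigr)$; comparing this with $2p(p-1)=2p^2-2p$ shows the inequality holds with room to spare whenever $p\ge 5$. The finitely many remaining cases lie in the range $6\le d\le 13$ (where $p\in\{2,3\}$); most are still covered by padding in the order-$12$ group, while for the four values $d\in\{6,7,12,13\}$, where $G$ is itself too small to pad into, I would simply exhibit an explicit diameter-$2$ Cayley graph of a suitable dihedral group with degree $d$ and at least $2p(p-1)\le 12$ vertices. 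This direct check is routine but is the one place where the clean padding argument does not apply.
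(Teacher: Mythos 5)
Your proposal is correct and follows the same route as the paper: keep the group $G$ of order $2p(p-1)$ produced by Lemma \ref{lemma:lbound} and pad its generating set up to size $d$. The paper's own proof is a three-line version of this, with one simplification worth noting: it pads exclusively with involutions (reflections) from $G\setminus S$, so that inverse-closure is automatic and your parity workaround of inverse pairs of rotations plus a spare reflection is unnecessary — in a dihedral group of order $2p(p-1)$ there are $p(p-1)$ reflections, of which the construction of Lemma \ref{lemma:lbound} uses only $p$. What you supply beyond the paper is the verification that the padding is possible at all. The paper simply asserts that $d-D(p)$ involutions are available, whereas you check the room condition $d\le 2p(p-1)-1$ via Bertrand's postulate (using maximality of $p$, so $d<D(p')\le D(2p)$, which is below $2p(p-1)$ once $p\ge 5$) and correctly isolate the exceptional cases $p\in\{2,3\}$, i.e.\ $d\in\{6,7,12,13\}$, where the group of order $2p(p-1)\le 12$ is too small to carry a graph of degree $d$ and the padding argument cannot apply; this is a genuine gap in the paper's one-line proof as written, harmless for the asymptotic Theorem \ref{thm:main} but real at these degrees. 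The one loose end on your side is that you defer the explicit graphs for those four values rather than exhibit them; they do exist (for instance, for $d=6$ take $D_8$ with all four reflections and the two rotations of order $4$: the omitted half-turn is the square of a generator, giving diameter $2$ on $8\ge 4$ vertices, and the other three values are handled similarly in $D_{10}$, $D_{14}$ and $D_{16}$), so the deferral is safe and your argument is complete in substance.
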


\begin{proof}
Let $p$ be as in the statement, $n=2p(p-1)$ and $G=D_{n}$. 
By Lemma \ref{lemma:lbound} there is an inverse-closed unit-free subset $S\subset G$ with $\lvert S\rvert=D(p)$ such that
$\Cay(G,S)$ has diameter 2. We can add $d-D(p)$ involutions from $G\setminus S$ to form a new inverse-closed unit-free generating set $S'$. 
The diameter of $\Cay(G,S')$ is still 2 and the result follows.
\end{proof}

Using the method of \cite{siran2011large} we may use ideas from number theory to obtain a result independent of $p$. Specifically, from the result in 
\cite{baker2001difference} it follows that for all sufficiently large $D$, there is some prime $p$ in the range $D-D^{0.525}\leq p\leq D$.
\begin{theorem}
\label{thm:main}
For all sufficiently large $d$, $DC(d,2)\geq 0.5 d^2-1.39d^{1.525}$.
\end{theorem}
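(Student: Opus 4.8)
The plan is to combine Lemma~\ref{lemma:extend} with the prime-gap estimate quoted from \cite{baker2001difference} to locate, for each large $d$, a prime $p$ lying as close as possible to $d/2$ while still satisfying $D(p)\le d$, and then to expand $2p(p-1)$ as a function of $d$. First I would fix the threshold for $p$ explicitly. Since $\lceil\sqrt{p}\rceil\le\sqrt{p}+1$, every prime $p\le D$ satisfies $D(p)=2(p+\lceil\sqrt{p}\rceil-1)\le 2D+2\sqrt{D}$, so it suffices to take $D$ to be the largest real solution of $2D+2\sqrt{D}=d$. Solving the resulting quadratic in $\sqrt{D}$ gives $D=\tfrac12\bigl(d+1-\sqrt{1+2d}\bigr)$, which is $\tfrac d2-\sqrt{d/2}+O(1)$. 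Any prime $p\le D$ then has $D(p)\le d$; since $2p(p-1)$ is increasing in $p$, the largest prime $p^{*}$ with $D(p^{*})\le d$ satisfies $p^{*}\ge p$, so Lemma~\ref{lemma:extend} gives $DC(d,2)\ge 2p^{*}(p^{*}-1)\ge 2p(p-1)$ for any such $p$.

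Next I would invoke the stated consequence of \cite{baker2001difference}: for all sufficiently large $D$ there is a prime in $[D-D^{0.525},D]$. Applying this to the $D$ chosen above produces a prime $p$ with
\[
p\ge D-D^{0.525}\ge \tfrac d2-\sqrt{d/2}-(d/2)^{0.525}+O(1),
\]
the dominant error being $(d/2)^{0.525}$ because $0.525>\tfrac12$ (and the perturbation of $D^{0.525}$ arising from $D=\tfrac d2+O(\sqrt d)$ is only $O(d^{0.025})$, hence negligible).

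It then remains to expand. Writing $p\ge \tfrac d2-E$ with $E=\sqrt{d/2}+(d/2)^{0.525}+O(1)$ and using that $2p(p-1)$ is increasing, I would compute
\[2p(p-1)\ge 2\Bigl(\tfrac d2-E\Bigr)^2-2\Bigl(\tfrac d2-E\Bigr)=\tfrac12 d^2-2dE+2E^2-d+2E.\]
The decisive term is $-2dE$, whose leading contribution is $-2d\,(d/2)^{0.525}=-2^{0.475}d^{1.525}$; the remaining pieces $-2d\sqrt{d/2}=-\sqrt2\,d^{1.5}$, $+2E^2=O(d^{1.05})$, $-d$ and $+2E$ are all $O(d^{1.5})$ or smaller. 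Since $2^{0.475}=1.3899\ldots<1.39$ and $d^{1.525}$ grows strictly faster than $d^{1.5}$, for all sufficiently large $d$ the subdominant terms are absorbed into the gap $(1.39-2^{0.475})d^{1.525}$, yielding $DC(d,2)\ge 0.5d^2-1.39d^{1.525}$.

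The main obstacle I anticipate is purely one of bookkeeping: carrying the $O(1)$, $O(\sqrt d)$ and $O(d)$ error terms through the expansion without inflating the coefficient of $d^{1.525}$ past $1.39$. The numerical coincidence that $2^{0.475}\approx 1.3899$ sits just below $1.39$ leaves almost no slack, so the final step—checking that the negative $O(d^{1.5})$ contributions are eventually dominated by $(1.39-2^{0.475})d^{1.525}$—is the one place where the hypothesis ``$d$ sufficiently large'' is genuinely needed and must be handled with care.
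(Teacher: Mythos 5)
Your proposal is correct and follows essentially the same route as the paper's proof: the same threshold $D=\tfrac12\bigl(d+1-\sqrt{2d+1}\bigr)$ obtained by replacing $\lceil\sqrt{p}\rceil$ with $\sqrt{p}+1$, the same appeal to the prime-gap result of \cite{baker2001difference} combined with Lemma~\ref{lemma:extend}, and the same final expansion resting on the slack $2^{0.475}\approx 1.3899<1.39$ absorbing the $O(d^{1.5})$ terms. The only (harmless) difference is bookkeeping: you carry the $\sqrt{d/2}$ correction through to the expansion of $2p(p-1)$, whereas the paper absorbs it into the coefficient $1.38992$ already at the level of the bound on $p$.
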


\begin{proof}
For given $d$, let $p$ be the largest prime such that $2(p+\lceil\sqrt{p}\rceil-1)\leq d$. 
Then $p$ is at least as large as the largest prime $q$ satisfying $2(q+\sqrt{q})\leq d$. 
Rearranging this we find that $q$ is the largest prime not exceeding $D=\frac12(d-\sqrt{2d+1}+1)$. 
By the result of \cite{baker2001difference} for sufficiently large $d$ we have $q\geq D-D^{0.525}$.
So for large $d$ we have:
\begin{align*}
p\geq q&\geq D-\left(\frac{d}{2}\right)^{0.525}\\
&=\frac12\left(d-(2d+1)^{0.5}+1-2^{0.475}d^{0.525}\right)
\end{align*}

For large $d$ the term in $d^{0.525}$ dominates terms of lower powers of $d$ and since $2^{0.475}\approx 1.389918$, for large $d$ we have
$p\geq\frac12\left(d-1.38992d^{0.525}\right)$. For sufficiently large $d$ we therefore have
\[2p(p-1)\geq \frac{d^2}{2}-1.39d^{1.525}\]
Since we can construct a Cayley graph of degree $d$ and diameter 2 on the dihedral group of order $2p(p-1)$ by Lemma \ref{lemma:extend} the result follows.
\end{proof}

Lemma \ref{lemma:ubound} and Theorem \ref{thm:main} allow us to determine completely the asymptotic behaviour of $DC(d,2)$.
\begin{corollary}
\[\lim_{d\to\infty}\frac{DC(d,2)}{d^2}=\frac{1}{2}\]
\end{corollary}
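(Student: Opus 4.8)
The plan is to obtain the equality by a squeeze argument, combining the lower bound of Theorem \ref{thm:main} with the upper bound implicit in Lemma \ref{lemma:ubound}. The limit is forced to exist because the two one-sided bounds both converge to $\tfrac12$.

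First I would extract the upper bound. An ordinary dihedral group $D_{2n}$ is a generalised dihedral group (take the abelian group $H$ to be cyclic of order $n$), so every diameter-2 Cayley graph of a dihedral group of degree $d$ satisfies the hypotheses of Lemma \ref{lemma:ubound}. That lemma gives $d\geq 2\sqrt{n}-1$ for a group of order $2n$, which rearranges to $2n\leq\tfrac12(d+1)^2$. Hence any such graph has at most $\tfrac12(d+1)^2$ vertices, so $DC(d,2)\leq\tfrac12(d+1)^2$. Dividing by $d^2$ gives
\[
\frac{DC(d,2)}{d^2}\leq\frac{(d+1)^2}{2d^2},
\]
and letting $d\to\infty$ yields $\limsup_{d\to\infty} DC(d,2)/d^2\leq\tfrac12$.

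Next I would handle the lower bound directly from Theorem \ref{thm:main}, which states that for all sufficiently large $d$ we have $DC(d,2)\geq 0.5d^2-1.39d^{1.525}$. Dividing by $d^2$ gives $DC(d,2)/d^2\geq\tfrac12-1.39d^{-0.475}$ for large $d$; since the correction term tends to $0$, this yields $\liminf_{d\to\infty} DC(d,2)/d^2\geq\tfrac12$. Combining the two inequalities forces the limit to exist and equal $\tfrac12$.

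There is essentially no hard step here, since both one-sided inequalities have already been proved in the preceding results. The only genuine work is the elementary rearrangement of the degree bound of Lemma \ref{lemma:ubound} into a bound on the number of vertices, together with the observation that the two error terms vanish in the limit. The single point deserving a word of justification is the inclusion of the class of ordinary dihedral groups in the class of generalised dihedral groups, which is what allows the upper bound of Lemma \ref{lemma:ubound} to be applied to $DC(d,2)$.
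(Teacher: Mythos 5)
Your proposal is correct and is precisely the argument the paper intends (and leaves implicit): the paper itself notes that Lemma \ref{lemma:ubound} rearranges to $\lvert G\rvert\leq\frac12(d+1)^2$, which combined with Theorem \ref{thm:main} gives the squeeze you describe. Your only added detail --- that an ordinary dihedral group is a generalised dihedral group, so the upper bound applies to $DC(d,2)$ --- is a worthwhile observation and matches the paper's setup exactly.
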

\section{Remarks}
It is tempting to try to extend these results to other split extensions of abelian groups where the action is via an automorphism other than the inversion map.
However the counting argument of Lemma \ref{lemma:ubound} relies on the fact that in our Cayley graph $\Cay(G,S)$ the generating set $S$ 
has the very particular form $\{(a,1):a\in S_1\}\cup\{(b,-1):b\in S_2\}$ where the set $S_1$ is inverse-closed and hence closed under the acting automorphism. 
The upper asymptotic bound $d^2/2$ would not necessarily hold for other more general semidirect products of an abelian group with
$C_2$, although no family of such groups with a larger bound is known.

We illustrate this remark with a couple of examples. Firstly, the construction of Abas\cite{Abas2014}, uses a direct product of the form $D_{2m}\times\mathbb{Z}_n$
which we may regard as the semidirect product $(\mathbb{Z}_m\times\mathbb{Z}_n)\rtimes C_2$, where $C_2$ acts on $\mathbb{Z}_m$ via its inversion automorphism
and on $\mathbb{Z}_n$ via the identity automorphism. In this case a generating set $S$ which is inverse-closed is not necessarily of the form 
in the previous paragraph, since the action in the semidirect product is not inversion.

However, we may modify the construction to obtain a family of groups for which the counting argument of Lemma \ref{lemma:ubound} does hold.
In the first example above, if $n=2$ then the identity automorphism coincides with the inversion automorphism and the Lemma applies.
Further, if we replace $\mathbb{Z}_2$ by any elementary abelian 2-group then its inversion automorphism is the identity and the argument continues to hold.
Thus the upper bound of Lemma \ref{lemma:ubound} holds for any group of the form $D_{2n}\times H$ where $H$ is an elementary abelian 2-group.
\bibliographystyle{amsplain}
\bibliography{grahame}
\end{document}